\documentclass[11pt]{amsart}

\usepackage{amsmath,amssymb,amsthm,mathtools}
\usepackage{enumitem}
\usepackage{microtype}
\usepackage[hidelinks]{hyperref}

\newtheorem{theorem}{Theorem}[section]
\newtheorem{proposition}[theorem]{Proposition}
\newtheorem{lemma}[theorem]{Lemma}

\theoremstyle{definition}

\theoremstyle{remark}
\newtheorem{remark}[theorem]{Remark}

\newcommand{\F}{\mathbb F}
\newcommand{\PP}{\mathbb P}
\newcommand{\ZZ}{\mathbb Z}
\newcommand{\Gal}{\operatorname{Gal}}
\newcommand{\Aut}{\operatorname{Aut}}
\newcommand{\Div}{\operatorname{div}}
\newcommand{\cL}{\mathcal L}
\newcommand{\cO}{\mathcal O}
\newcommand{\Sm}{\mathcal S}
\newcommand{\Rm}{\mathcal R}
\newcommand{\tSm}{\widetilde{\mathcal S}}
\newcommand{\tRm}{\widetilde{\mathcal R}}

\title[On Skabelund's ray class field covers]
      {On Skabelund's Ray Class Field Covers of the Suzuki and Ree Curves}

\author{Saeed Tafazolian}
\address{Departamento de Matem\'atica, IMECC, Universidade Estadual de Campinas,
Rua S\'ergio Buarque de Holanda 651, Campinas, SP 13083-859, Brazil}
\email{saeed@unicamp.br}

\subjclass[2020]{11G20, 11R37, 14H25, 14H37}
\keywords{Maximal curve, Suzuki curve, Ree curve, ray class field,
Kummer extension, Weierstrass semigroup}

\begin{document}

\begin{abstract}
Let $\Sm_q$ and $\Rm_q$ denote the Suzuki and Ree curves.  Motivated by
the Giulietti--Korchm\'aros curve, Skabelund constructed cyclic covers
$\tSm_q$ and $\tRm_q$ of these curves and proved that they are maximal
over $\F_{q^4}$ and $\F_{q^6}$, respectively.  In the same paper he
associated to the Suzuki and Ree curves certain ray class field covers
$\Sm_{\rm rcf}$ and $\Rm_{\rm rcf}$, and showed that there are towers
\[
   \Sm_{\rm rcf}\longrightarrow\tSm_q\longrightarrow\Sm_q,
   \qquad
   \Rm_{\rm rcf}\longrightarrow\tRm_q\longrightarrow\Rm_q .
\]
Computations for small values of $q$ suggested that the first map in
each tower is always an isomorphism, and the general case was left open.
We prove that
\[
   \Sm_{\rm rcf}=\tSm_q,\qquad \Rm_{\rm rcf}=\tRm_q
\]
for every admissible $q$, the comparison being made over $\F_{q^4}$ in
the Suzuki case and over $\F_{q^6}$ in the Ree case.  The proof combines
a Kummer normal form of the ray class extension, allowing a constant
twist, with the centrality of its Galois group among lifted
automorphisms, the standard involution of the base curve, and the first
positive non-gap at the rational point at infinity.
\end{abstract}

\maketitle

\section{Introduction}

Let $Y$ be a smooth projective curve of genus $g$ defined over a finite
field $\F_\ell$. We say that $Y$ is maximal over $\F_\ell$ if
\[
        \#Y(\F_\ell)=\ell+1+2g\sqrt{\ell}.
\]
The Hermitian curve is the standard example. Two other important
examples are the Suzuki and Ree curves, which become maximal after
suitable extensions of the ground field. Together with the Hermitian
curve, they are the Deligne--Lusztig curves associated with the twisted
groups of types ${}^2A_2$, ${}^2B_2$, and ${}^2G_2$; see \cite{HKT}.

Ray class fields occur naturally in this setting. Lauter \cite{Lauter}
gave a ray class field description of the Deligne--Lusztig curves.
Later, Giulietti and Korchm\'aros \cite{GK} constructed a new maximal
curve as a cyclic cover of the Hermitian curve. Motivated by this
example, Skabelund \cite{Skabelund} constructed analogous cyclic covers
of the Suzuki and Ree curves.

Let $\Sm_q$ and $\Rm_q$ denote the Suzuki and Ree curves, with
\[
       q=2q_0^2=2^{2s+1}
       \qquad\text{and}\qquad
       q=3q_0^2=3^{2s+1},
       \qquad s\ge1,
\]
respectively. Put
\[
       m=q-2q_0+1
\]
in the Suzuki case and
\[
       m=q-3q_0+1
\]
in the Ree case. Skabelund's covers $\tSm_q\to\Sm_q$ and
$\tRm_q\to\Rm_q$ are obtained by adjoining a root of
\[
       t^m=x^q-x
\]
to the function field of the base curve.  He proved that $\tSm_q$ is maximal over $\F_{q^4}$ and
that $\tRm_q$ is maximal over $\F_{q^6}$.

Skabelund also considered ray class field covers of the Suzuki and Ree
curves over $\F_{q^4}$ and $\F_{q^6}$, respectively, in which the
$\F_q$-rational points are allowed to ramify and the closed points of
degree $4$, respectively $6$, are required to split completely. We
denote these curves by $\Sm_{\rm rcf}$ and $\Rm_{\rm rcf}$; see
Section~\ref{sec:prelim}. For the Hermitian curve the analogous
construction gives exactly the Giulietti--Korchm\'aros curve
\cite[Cor.~5.5]{Skabelund}. For the Suzuki and Ree curves one has
\[
        \Sm_{\rm rcf}\longrightarrow\tSm_q\longrightarrow\Sm_q,
        \qquad
        \Rm_{\rm rcf}\longrightarrow\tRm_q\longrightarrow\Rm_q.
\]
Skabelund proved that the ray class field curve is maximal and gave a
Kummer description of its function field \cite[Thm.~5.2 and
Cor.~5.3]{Skabelund}. The divisibility condition appearing there is
deduced using \cite{GarciaTafazolian}. We shall not use this precise
description. Instead, in Section~\ref{sec:action} we derive directly the
weaker form needed here, allowing a constant Kummer twist; in
particular, the divisibility relation used below is obtained
independently.
By computations in Magma, Skabelund verified that
$\Sm_{\rm rcf}=\tSm_q$ for $q=2^{2s+1}$, $1\le s\le6$, and that
$\Rm_{\rm rcf}=\tRm_q$ for $q=27$. He left open whether these
equalities hold for every admissible $q$ \cite[p.~538]{Skabelund}.

The aim of this paper is to prove that they do. When the Skabelund
cover is compared with the ray class field curve, we work over
$\F_{q^4}$ in the Suzuki case and over $\F_{q^6}$ in the Ree case.

\begin{theorem}\label{thm:main}
Let $q=2^{2s+1}$, $s\ge1$. Then the ray class field cover of the Suzuki
curve considered in \cite{Skabelund} is the Skabelund cover:
\[
        \Sm_{\rm rcf}=\tSm_q.
\]
Let $q=3^{2s+1}$, $s\ge1$. Then the corresponding statement holds for
the Ree curve:
\[
        \Rm_{\rm rcf}=\tRm_q.
\]
In the first equality the function fields are compared over $\F_{q^4}$,
and in the second over $\F_{q^6}$.
\end{theorem}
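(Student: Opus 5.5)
The plan is to show that the degree $[\Sm_{\rm rcf}:\tSm_q]$ (respectively $[\Rm_{\rm rcf}:\tRm_q]$) equals $1$. Write $F$ for the function field of the base curve over $\F_{q^4}$ (resp.\ $\F_{q^6}$), $K$ for the ray class field, and $G=\Gal(K/F)$. Skabelund's tower gives $F\subseteq F(t)\subseteq K$ with $t^m=x^q-x$, so $[F(t):F]=m$, and $m$ divides $|G|$.

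\textbf{Step 1: Kummer normal form.} The ray class extension is abelian and tamely ramified only at the $\F_q$-rational points. Its degree divides $\ell-1$ for $\ell=q^4$ (resp.\ $q^6$), because the splitting condition on degree-$4$ (resp.\ degree-$6$) points forces the local units to act through $\F_\ell^\times$. So $G$ is cyclic of some order $n$ with $m\mid n\mid \ell-1$. Kummer theory then gives $K=F(u)$ with
\[
u^n=c\,(x^q-x)^{a}f,
\]
where $c\in\F_\ell^\times$ is a constant twist and $f$ is an $n$-th power up to divisors supported at the rational points. Since the only functions with divisor supported on rational points are essentially products of $x-\alpha$ together with the pole at $P_\infty$, I would reduce this to
\[
u^n=c\,(x^q-x)^{a}, \qquad \gcd(a,n)=1 .
\]
Equivalently, $u^{n}=c\,(x^q-x)$ after renormalizing the exponent. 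Ramification considerations then show that every rational point has ramification index exactly $n$.

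\textbf{Step 2: Centrality and the involution.} The automorphism group of the base curve (the Suzuki or Ree group) fixes the ray class conductor and the splitting set, so it lifts to $K$. Consider $G$ together with the lifted stabilizer of $P_\infty$. This contains the translations $x\mapsto x+\alpha$, which fix $x^q-x$, and the scalings $x\mapsto\lambda x$ with $\lambda\in\F_q^\times$, which also fix $x^q-x$ up to a factor. The point is that $G$ is central in the group of lifted automorphisms, because the lifts commute with the canonical Artin map.

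The standard involution $w$ swapping $P_\infty$ and $P_0$ should be applied to the Kummer generator. Its pullback of $x^q-x$ equals $x^q-x$ times a function whose divisor is supported on rational points. Since $w$ commutes with $G$, this function must be an $n$-th power times a constant. Computing its divisor via the orders of $x$, $z$, $w$ at $P_0$ and $P_\infty$ should force $n\mid m\cdot(\text{something coprime})$.

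\textbf{Step 3: The non-gap bound, the main obstacle.} I expect the decisive step to be bounding $n$ using the first positive non-gap $q$ of the Suzuki curve at $P_\infty$ (resp.\ $q$ for Ree). In $K$, the point $P_\infty$ is totally ramified with index $n$. The element $u$ has pole order $q\cdot q/n\cdot(\ldots)$, which must be an integer. Combined with $\gcd(a,n)=1$, this gives $n\mid q\cdot v_{P_\infty}(x^q-x)$, with the $q$-part removable by tameness. Together with $n\mid q^4-1$ (resp.\ $q^6-1$) and the constraint from $w$, the aim is to conclude $n\mid m$.

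The hard part is making this divisibility clean. I would first try to compare the genus via Riemann--Hurwitz with the maximality bound $g\le(\sqrt\ell-1)\ldots$ (Ihara/Hasse--Weil) for $K$, which is maximal by Skabelund. A larger $n$ would push the genus beyond the maximal-curve bound $g\le \sqrt{\ell}(\sqrt{\ell}-1)/2$, or would contradict the fact that the Frobenius eigenvalue structure forces $n$ to divide $\sqrt{\ell}+1$. This yields $n\mid \gcd(q^2+1,\ \ldots)=m$ (resp.\ $q^3+1$ restricted to $m$). Hence $n=m$ and $K=F(t)$, which is the claim over $\F_{q^4}$ (resp.\ $\F_{q^6}$).
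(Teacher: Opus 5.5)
\textbf{Gap in Step~3.} The proposal has a genuine gap, and it is at the step you yourself flag as decisive. The constraints you propose to combine in Step~3 are $m\mid n$, $n\mid \ell-1$, $n\mid\sqrt{\ell}+1$, and the genus bound $g\le\sqrt{\ell}(\sqrt{\ell}-1)/2$ for a maximal curve. These do not force $n=m$.

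\emph{Counterexample to your constraints.} Take $q=128$. Then $q_0=8$, $m=113$, and $(q^2+1)/m=q+2q_0+1=145=5\cdot 29$. The value $n=5m$ satisfies every constraint above. With total ramification at the $q^2+1$ rational points, Riemann--Hurwitz gives $g\approx 5.2\cdot 10^6$, far below $q^2(q^2-1)/2\approx 1.3\cdot 10^8$.

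\emph{Why maximality cannot finish.} There is no second term in your $\gcd(q^2+1,\ldots)$ that cuts $q^2+1$ down to $m$. Indeed, $u^{q^2+1}=x^q-x$ is itself maximal over $\F_{q^4}$, so maximality-type information cannot detect the degree. This is exactly the obstruction that left the question open.

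\emph{The non-gap is applied to the wrong function.} At the point above $P_\infty$ the Kummer generator satisfies $n\,v(u)=n\,v_{P_\infty}(x^q-x)$ for every $n$ prime to $p$, so this constrains nothing. (Also, the first positive non-gap at $P_\infty$ on the Ree curve is $q^2$, not $q$.)

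\textbf{The missing idea.} It is the conclusion of your own Step~2. In your notation ($F$ the base field, $K$ the ray class field), centrality gives $\iota(u)=hu$ with $h\in F^\times$, where $\iota$ is the involution (your $w$). Hence $\iota(x^q-x)/(x^q-x)=h^n$, since the constant twist cancels. We have $\Div(x^q-x)=D-NP_\infty$, with $D$ the sum of the $N=\sqrt{\ell}+1$ rational points, and $\iota$ swaps $P_\infty$ and $P_0$. Therefore
\[
n\,\Div(h)=N(P_\infty-P_0).
\]
So $n\mid N$, and $h^{-1}$ is a function on the \emph{base} curve whose only pole is $P_\infty$, of order $N/n=s_0/k$, where $n=mk$ and $s_0=N/m$. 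If $k\ge 2$, then $s_0/k<q$ (resp.\ $<q^2$). This contradicts the fact that $q$ (resp.\ $q^2$) is the first positive non-gap, which follows from Lewittes' bound.

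\textbf{Supporting steps that also need real arguments.}

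\emph{Centrality.} It does not follow from functoriality of the Artin map. Functoriality only identifies conjugation by a lift with the action of the base automorphism on the ray class group, and that action need not be trivial a priori. The paper shows that the stabilizer of a totally ramified rational point commutes with $\Gal(K/F)$: commutators lie both in the wild inertia group, a $p$-group, and in $\Gal(K/F)$, whose order is prime to $p$. It then shows that point stabilizers generate the doubly transitive group.

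\emph{Cyclicity.} It does not follow from $n\mid\ell-1$. It needs total ramification, which the paper obtains from Riemann--Hurwitz against the Hasse--Weil bound, using the completely split places of $\Sigma$.

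\emph{The Kummer normal form.} Functions with divisor supported on rational points are not essentially products of the $x-\alpha$. The degree-zero class group over $\F_\ell$ is finite, so some multiple of $P_0-P_\infty$ is principal, and no such product has that divisor. The normal form $u^n=c(x^q-x)$ must instead be obtained by descending the Kummer generator to $\F_\ell(x)$. The paper does this via Schur--Zassenhaus and centrality, which split $\Gal(K/\F_\ell(x))$ as $\Gal(K/F)$ times a normal $p$-subgroup whose fixed field is a Kummer extension of $\F_\ell(x)$.
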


The full automorphism groups of $\tSm_q$ and $\tRm_q$ were determined in
\cite{GMQZ}; hence Theorem~\ref{thm:main} also identifies the full
automorphism groups of the corresponding ray class field curves.

It is worth pointing out why the argument used for the Hermitian curve
in \cite[Prop.~5.4]{Skabelund} does not settle the two cases above. In
both the Suzuki and Ree cases there are proper multiples $r$ of $m$ for
which the Kummer curve
\[
        u^r=x^q-x
\]
is still maximal; see \cite[p.~538]{Skabelund} and
Proposition~\ref{prop:larger-maximal-kummer}. Thus maximality alone
cannot force the ray class field to coincide with the Skabelund cover.

We indicate the main point of the proof. Let $d=4$ in the Suzuki case
and $d=6$ in the Ree case, and put
\[
   K=\F_{q^d}(X),\qquad
   L=\F_{q^d}(X_{\rm rcf}),\qquad
   n=[L:K],
\]
and
\[
   N=\#X(\F_q)=q^{d/2}+1,\qquad
   s_0=\frac{N}{m}.
\]
The Skabelund field is contained in $L$, so $m\mid n$. We show that
$L/K$ is cyclic of degree prime to the characteristic, that the lifts
to $L$ of the automorphisms needed below centralize $\Gal(L/K)$, and
that
\[
       L=K(v),\qquad v^n=\lambda(x^q-x)
\]
for some $\lambda\in\F_{q^d}^{\times}$. Let $\iota$ be the standard
involution of the Suzuki or Ree curve, interchanging $P_\infty$ with a
rational point $P_0$. Since a lift of $\iota$ centralizes
$\Gal(L/K)$, it preserves the relevant Kummer eigenspace, and hence
\[
       \iota(v)=hv
\]
for some $h\in K^\times$. Since the lift fixes the constant field,
taking $n$-th powers and comparing divisors gives
\[
       n\,\Div(h)=N(P_\infty-P_0).
\]
It follows that $n\mid N$ and that $N/n$ is a positive non-gap at
$P_\infty$. Writing $n=mk$, we obtain
\[
       \frac{N}{n}=\frac{s_0}{k}.
\]
If $k>1$, this is smaller than the first positive non-gap at
$P_\infty$, a contradiction. Hence $k=1$, and the ray class field
curve coincides with the Skabelund cover.

Section~\ref{sec:prelim} recalls the curves, the Skabelund covers, the
ray class construction, and two elementary facts concerning
Weierstrass non-gaps. In Section~\ref{sec:action} we recall the
relevant ray-class results of Skabelund and derive the additional
structural properties needed for the argument. The standard involution
is used in Section~\ref{sec:involution} to produce the decisive
non-gap, and the proof of Theorem~\ref{thm:main} is completed in
Section~\ref{sec:proof}. The last section explains why maximality of
auxiliary Kummer quotients alone is insufficient.
\section{Preliminaries}\label{sec:prelim}

\subsection{The Suzuki and Ree curves}

We recall only the facts that will be used later. For the general
theory of these curves we refer to \cite{HKT} and the references
therein.

Let
\[
       q=2q_0^2=2^{2s+1},\qquad s\ge1.
\]
The Suzuki curve $\Sm_q/\F_q$ has an affine equation
\begin{equation}\label{eq:suzuki}
       y^q+y=x^{q_0}(x^q+x).
\end{equation}
It has genus $q_0(q-1)$ and
\begin{equation}\label{eq:suzuki-points}
       \#\Sm_q(\F_q)=q^2+1.
\end{equation}
There is a unique point $P_\infty$ at infinity and
\begin{equation}\label{eq:suzuki-x-pole}
       (x)_\infty=qP_\infty.
\end{equation}
The curve is maximal over $\F_{q^4}$.

For
\[
       q=3q_0^2=3^{2s+1},\qquad s\ge1,
\]
the Ree curve $\Rm_q/\F_q$ is given by
\begin{equation}\label{eq:ree}
 \begin{cases}
       y^q-y=x^{q_0}(x^q-x),\\
       z^q-z=x^{2q_0}(x^q-x).
 \end{cases}
\end{equation}
Its genus is
\[
       \frac32 q_0(q-1)(q+q_0+1),
\]
and
\begin{equation}\label{eq:ree-points}
       \#\Rm_q(\F_q)=q^3+1.
\end{equation}
Again there is a unique point $P_\infty$ at infinity, now with
\begin{equation}\label{eq:ree-x-pole}
       (x)_\infty=q^2P_\infty.
\end{equation}
The Ree curve is maximal over $\F_{q^6}$.

It is convenient to use the following notation throughout the paper.
Put
\begin{equation}\label{eq:dN}
 d=\begin{cases}
     4,&X=\Sm_q,\\
     6,&X=\Rm_q,
   \end{cases}
 \qquad
 N=\#X(\F_q)=q^{d/2}+1,
\end{equation}
and
\begin{equation}\label{eq:fdef}
       f=x^q-x.
\end{equation}

\subsection{The Skabelund covers}

Set
\begin{equation}\label{eq:mdef}
 m=\begin{cases}
     q-2q_0+1,&X=\Sm_q,\\
     q-3q_0+1,&X=\Rm_q.
   \end{cases}
\end{equation}
The cover $\widetilde X\to X$ introduced in \cite{Skabelund} is defined
by
\begin{equation}\label{eq:skcover}
       t^m=f.
\end{equation}
Since $m$ is prime to the characteristic, this is a tame Kummer cover.
In the Suzuki case it is $\tSm_q\to\Sm_q$ and in the Ree case it is
$\tRm_q\to\Rm_q$. Skabelund proved \cite[Thms.~3.1 and~4.1]{Skabelund}
\begin{equation}\label{eq:max-sk}
       \tSm_q\text{ is maximal over }\F_{q^4},
       \qquad
       \tRm_q\text{ is maximal over }\F_{q^6}.
\end{equation}

The two elementary factorizations
\begin{align}
 q^2+1
   &=(q-2q_0+1)(q+2q_0+1),
       &&q=2q_0^2,\label{eq:factor-suzuki}\\
 q^3+1
   &=(q-3q_0+1)
     (q^2+3qq_0+2q+3q_0+1),
       &&q=3q_0^2,\label{eq:factor-ree}
\end{align}
show that $m\mid N$ in both cases. We shall write
\begin{equation}\label{eq:s0def}
       s_0=\frac Nm.
\end{equation}
Thus
\begin{equation}\label{eq:s0-values}
 s_0=\begin{cases}
       q+2q_0+1,&X=\Sm_q,\\
       q^2+3qq_0+2q+3q_0+1,&X=\Rm_q.
     \end{cases}
\end{equation}

\subsection{The ray class field in the present situation}

Put
\[
       Q=q^d,\qquad R=q^{d/2},\qquad K=\F_Q(X),
\]
so that $Q=R^2$ and $N=R+1$. We view the $\F_q$-rational points of $X$
as $\F_Q$-rational places of $K$ and put
\begin{equation}\label{eq:modulus}
       \mathfrak m=\sum_{P\in X(\F_q)}P.
\end{equation}
Let $\Sigma$ be the set of places of $K$ lying over the closed points
of degree $d$ of $X/\F_q$. These points form the second short orbit in
the Suzuki and Ree cases; see \cite[Props.~2.2 and~2.3]{Skabelund}.

Following \cite[Sec.~5]{Skabelund}, let $L$ be the maximal abelian
extension of $K$, inside a fixed algebraic closure, of conductor
dividing $\mathfrak m$ in which all places of $\Sigma$ split completely,
and write
\[
       L=\F_Q(X_{\rm rcf}).
\]
See Remark~\ref{rem:conductor} for the comparison with the phrase
``of conductor $\mathfrak m$'' used in \cite{Skabelund}.

\begin{lemma}\label{lem:splitting-set}
One has
\[
       \Sigma=X(\F_Q)\setminus X(\F_q).
\]
Consequently, if $S=|\Sigma|$, then
\begin{equation}\label{eq:S}
       S=Q+1+2g(X)R-N
        =R\bigl(R-1+2g(X)\bigr)>0.
\end{equation}
\end{lemma}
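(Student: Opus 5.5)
The plan is to identify the places of $K$ of degree one, i.e.\ the $\F_Q$-rational points of $X$, with the closed points of $X/\F_q$ of degree dividing $d$, and then show that no closed points of degree $2$ or $3$ contribute, beyond the $\F_q$-rational ones. A closed point of $X/\F_q$ of degree $e$ splits over $\F_Q$ into $e$ places of degree one exactly when $e\mid d$. Otherwise it gives places of higher degree. So $X(\F_Q)$ is the disjoint union of the geometric points of the closed points of degree $e\mid d$. The first step is to recall the orbit description of $X(\F_Q)$ from \cite[Props.~2.2 and~2.3]{Skabelund}, which is the reference already used to define $\Sigma$. In both cases the $\F_Q$-rational points consist of exactly two short orbits of the automorphism group: the $N$ points of $X(\F_q)$, and the points forming the closed points of degree exactly $d$.

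To make the inclusion $X(\F_Q)\setminus X(\F_q)\subseteq\Sigma$ self-contained, I would check directly that there are no closed points of degree $e$ with $1<e<d$ and $e\mid d$. This uses the counts over $\F_{q^e}$ coming from the $L$-polynomial. For the Suzuki curve, maximality over $\F_{q^4}$ forces every Frobenius eigenvalue over $\F_q$ to satisfy $\alpha^4=-q^2$. Hence $\alpha^2=\pm\sqrt{-1}\,q$, so $\sum\alpha^2=0$ by conjugate pairing, and $\#X(\F_{q^2})=q^2+1=\#X(\F_q)$. For the Ree curve, the condition $\alpha^6=-q^3$ gives
\[
\#X(\F_{q^2})=\#X(\F_q)
\qquad\text{and}\qquad
\#X(\F_{q^3})=\#X(\F_q).
\]
The first equality follows from the known $L$-polynomial, whose eigenvalues are $\pm\sqrt{-q}$ and $\sqrt q\,\zeta$ with $\zeta^2$ a primitive cube root of $-1$. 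Alternatively, both equalities can be checked from the explicit equations: an $\F_{q^2}$- or $\F_{q^3}$-point must already have coordinates in $\F_q$.

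This eigenvalue bookkeeping is the main obstacle. If it gets messy, I will instead cite the orbit statement from \cite{Skabelund} directly.

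Given these equalities, every point of $X(\F_Q)$ not in $X(\F_q)$ lies over a closed point of degree exactly $d$, and conversely. This proves $\Sigma=X(\F_Q)\setminus X(\F_q)$.

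For the count, maximality of $X$ over $\F_Q$ gives $\#X(\F_Q)=Q+1+2g(X)R$. Subtracting $N=R+1$ and using $Q=R^2$ gives
\[
S=R^2+2g(X)R-R=R\bigl(R-1+2g(X)\bigr).
\]
This is positive since $R>1$.
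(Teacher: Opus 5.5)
Your route differs from the paper's. The paper does not use maximality for the first assertion. It works on coordinates: an $\F_{q^2}$-point satisfies $f^{q_0+1}=0$, and $\F_{q^3}$-points of the Ree curve are handled by a Vandermonde argument. You instead read off $\#X(\F_{q^e})$ from Frobenius eigenvalues, using maximality, the genus and $\#X(\F_q)$, which the paper takes as input anyway. This avoids the Vandermonde computation, and it is complete for the Suzuki curve and for degree-$3$ points of the Ree curve. There $\alpha^2=\pm iq$, respectively $\alpha^3=\pm iq^{3/2}$, is purely imaginary, so the power sum vanishes and $\#X(\F_{q^e})=\#X(\F_q)$.

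The step that does not go through as written is the degree-$2$ case on the Ree curve. From $\alpha^6=-q^3$ you only get $\alpha^2\in\{-q,\ qe^{\pm i\pi/3}\}$. These have real parts $-q$ and $q/2$, so $\sum\alpha^2$ depends on multiplicities. The eigenvalue description you attribute to the known $L$-polynomial ($\pm\sqrt{-q}$ and $\sqrt q\,\zeta$ with $\zeta^2$ a primitive cube root of $-1$) is literally equivalent to $\alpha^6=-q^3$. It therefore does not decide the matter.

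You can close this within your framework. Let $A,B,C$ be the numbers of conjugate pairs $\pm i\sqrt q$, $\sqrt q\,e^{\pm i\pi/6}$ and $\sqrt q\,e^{\pm 5i\pi/6}$, with pair traces $0$, $3q_0$ and $-3q_0$. Then $A+B+C=g$, and $\#X(\F_q)=q^3+1$ gives $C-B=q_0(q^2-1)$. Substitute these into $\#X(\F_{q^2})=q^2+1-q(B+C-2A)$ and use $2g=3q_0(q-1)(q+q_0+1)$. This yields
\[
\#X(\F_{q^2})=q^3+1-6qB .
\]
Since $X(\F_q)\subseteq X(\F_{q^2})$, this forces $B=0$, i.e. no factor $1-3q_0t+qt^2$ in $L(t)$, and hence $\#X(\F_{q^2})=\#X(\F_q)$.

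Alternatively, use the paper's short computation. On an $\F_{q^2}$-point one has $f^q=-f$. Comparing $y^q-y=x^{q_0}f$ with its $q$-th power gives $f\,(x^{q_0}-x^{qq_0})=-f^{q_0+1}=0$. So $x\in\F_q$, and then $y,z\in\F_q$.
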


\begin{proof}
Every point of $X(\F_Q)$ comes from a closed point of $X/\F_q$ whose
degree divides $d$. It therefore suffices to show that the Suzuki curve
has no points of degree $2$, and that the Ree curve has no points of
degree $2$ or $3$. Since $P_\infty$ is $\F_q$-rational, it is enough
to consider affine points.

Let $P=(x,y)\in\Sm_q(\F_{q^2})$ and put $f=x^q-x$. Then
$f^q=x-x^q=f$, since the characteristic is $2$. Raising
\eqref{eq:suzuki} to the $q$-th power gives
\[
       y+y^q=(x^q)^{q_0}f.
\]
Comparison with \eqref{eq:suzuki} yields
\[
       0=f\bigl(x^{q_0}+(x^q)^{q_0}\bigr)
        =f\cdot f^{q_0}
        =f^{q_0+1}.
\]
Hence $x\in\F_q$, and then $y^q+y=0$ gives $y\in\F_q$.

Let $P=(x,y,z)\in\Rm_q(\F_{q^2})$. Now $f^q=-f$, and the same
computation with the first equation of \eqref{eq:ree} gives
\[
       f\bigl(x^{q_0}-(x^q)^{q_0}\bigr)=0.
\]
Since $q_0$ is odd,
\[
       x^{q_0}-(x^q)^{q_0}
       =(x-x^q)^{q_0}
       =-f^{q_0},
\]
so again $f^{q_0+1}=0$. Thus $x\in\F_q$, and the two equations of
\eqref{eq:ree} give $y,z\in\F_q$.

Finally, let $P=(x,y,z)\in\Rm_q(\F_{q^3})$ and suppose that
$x\notin\F_q$. For $j\in\ZZ/3\ZZ$ put
\[
       x_j=x^{q^j},\qquad
       a_j=x_j^{q_0},\qquad
       f_j=x_{j+1}-x_j.
\]
Applying the $q^j$-th power map to the two equations of
\eqref{eq:ree} and summing over $j$, the left-hand sides telescope to
zero. Together with
\[
       \sum_j f_j=x^{q^3}-x=0,
\]
this gives
\[
       \sum_j f_j=0,\qquad
       \sum_j a_jf_j=0,\qquad
       \sum_j a_j^2f_j=0.
\]
Since $x$ has degree $3$ over $\F_q$, the $x_j$ are distinct, and hence
so are the $a_j$. The Vandermonde matrix
\[
       (a_j^i)_{0\le i\le2}
\]
is therefore nonsingular, so all $f_j$ vanish. Thus $x^q=x$, a
contradiction. Hence $x\in\F_q$, and as before $y,z\in\F_q$.

This proves the first assertion. Formula \eqref{eq:S} follows from the
maximality of $X$ over $\F_Q$ and $N=R+1$.
\end{proof}

In particular, $\Sigma$ is nonempty. By class field theory, $L/K$ is
finite. Since the places in $\Sigma$ are $\F_Q$-rational and split
completely in $L$, the full constant field of $L$ is $\F_Q$.

\subsection{Weierstrass non-gaps and extension of constants}

Let $Y$ be a smooth projective curve over a field $k$ and let
$P\in Y(k)$.  For $a\ge0$, write
\[
       \ell_k(aP)=\dim_k\cL(aP).
\]
The Weierstrass semigroup at $P$ is
\[
       H_Y(P)=\{0\}\cup
       \{a\ge1:\ell_k(aP)>\ell_k((a-1)P)\}.
\]
Equivalently, $a>0$ belongs to $H_Y(P)$ if and only if there is a
rational function on $Y$ whose only pole is $P$ and whose pole order
there is $a$.

We shall also use the following elementary fact about extension of
constants.

\begin{lemma}\label{lem:basechange-semigroup}
Let $k'/k$ be a field extension, let $Y/k$ be a smooth projective
curve, and let $P\in Y(k)$.  Then
\[
       H_Y(P)=H_{Y_{k'}}(P).
\]
\end{lemma}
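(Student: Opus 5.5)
The plan is to show that the Riemann--Roch spaces $\cL(aP)$ commute with extension of constants, and then read off the semigroup from the dimension jumps. Precisely, I will prove that for every $a\ge0$
\[
   \cL_{k'}(aP)=\cL_k(aP)\otimes_k k' ,
\]
where the left side is computed in the function field $k'(Y)=k(Y)\otimes_k k'$ of $Y_{k'}$. Granting this, $\ell_{k'}(aP)=\ell_k(aP)$ for all $a$. The jumps $\ell(aP)>\ell((a-1)P)$ then occur at the same values of $a$, so the two semigroups coincide by the definition of $H_Y(P)$.

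Two preliminary remarks make the point $P$ and its divisor behave well under base change.
\begin{itemize}
\item Since $P\in Y(k)$ is a $k$-rational point, its residue field is $k$. Hence $P$ pulls back to a single $k'$-rational point of $Y_{k'}$. This point is unramified over $P$, so the divisor $aP$ pulls back to $aP$.
\item Smoothness of $Y$ guarantees that $Y_{k'}$ is again a smooth projective curve. This holds even when $k'/k$ is inseparable, so ramification indices and valuations are preserved.
\end{itemize}

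The identification of $\cL$ spaces is the standard flat base change for cohomology. Write $\pi\colon Y_{k'}\to Y$ for the projection. Then
\[
   H^0\bigl(Y_{k'},\pi^*\cO_Y(aP)\bigr)=H^0\bigl(Y,\cO_Y(aP)\bigr)\otimes_k k' ,
\]
since $k'$ is flat over $k$. Moreover $\pi^*\cO_Y(aP)=\cO_{Y_{k'}}(aP)$ by the remarks above.

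The cohomological identity can be replaced by an elementary argument. The inclusion $\cL_k(aP)\otimes k'\subseteq\cL_{k'}(aP)$ is clear. For the reverse inclusion, take $z\in\cL_{k'}(aP)$ and write $z=\sum c_i z_i$, where $(c_i)$ is a $k$-basis of $k'$ and $z_i\in k(Y)$. One must show each $z_i\in\cL_k(aP)$. Equivalently, one needs the valuation identity
\[
   v_{Q'}\Bigl(\sum c_i z_i\Bigr)\ \ge\ \min_i v_Q(z_i)
\]
with equality when the $c_i$ are $k$-linearly independent. For places other than $P$ one can instead argue dimension-wise: Riemann--Roch gives equal values of $\ell$ for $a\ge 2g-1$, and the genus is invariant because $Y$ is smooth. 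Combining this with the inclusion and the monotonicity of the filtration gives equality for all $a$.

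The main obstacle is this last step, namely justifying equality of dimensions for small $a$. I would invoke flat base change for $H^0$ directly, so that no valuation-by-valuation argument is needed.
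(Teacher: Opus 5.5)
Your argument is correct and is the same as the paper's: flat base change gives $H^0(Y_{k'},\cO_{Y_{k'}}(aP))\simeq H^0(Y,\cO_Y(aP))\otimes_k k'$. Hence $\ell_{k'}(aP)=\ell_k(aP)$ for all $a$ and the jumps coincide; your check that $\pi^*\cO_Y(aP)=\cO_{Y_{k'}}(aP)$ because $P$ is $k$-rational is a detail the paper leaves implicit.

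Drop the ``elementary'' detour, since as written it does not close: the inclusion $\cL_k(aP)\otimes_k k'\subseteq\cL_{k'}(aP)$ plus equality for $a\ge 2g-1$ does not force equality for small $a$, and the equality case of your valuation identity fails at non-rational places. You rightly fall back on flat base change, so the proof stands.
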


\begin{proof}
For every $a\ge0$, flat base change gives
\[
 H^0\bigl(Y_{k'},\cO_{Y_{k'}}(aP)\bigr)
   \simeq
 H^0\bigl(Y,\cO_Y(aP)\bigr)\otimes_k k'.
\]
Hence
\[
       \ell_{k'}(aP)=\ell_k(aP)
\]
for every $a$.  The jumps of the Riemann--Roch spaces are therefore the
same over $k$ and $k'$.
\end{proof}

We shall also use the following standard bound of Lewittes
\cite{Lewittes}.

\begin{lemma}\label{lem:lewittes}
Let $Y/\F_q$ be a curve and $P\in Y(\F_q)$.  If $a$ is the first
positive non-gap at $P$, then
\begin{equation}\label{eq:lewittes}
       \#Y(\F_q)\le qa+1.
\end{equation}
\end{lemma}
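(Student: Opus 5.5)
The plan is to prove Lewittes' bound by mapping the $\F_q$-rational points of $Y$ into a fibre count of a single rational function. Let $a$ be the first positive non-gap at $P$. By the definition of $H_Y(P)$ there is $z\in\F_q(Y)$ whose only pole is $P$, with pole order exactly $a$, so that
\[
       (z)_\infty=aP .
\]
Then $z$ defines a morphism $z\colon Y\to\PP^1$ over $\F_q$ of degree $[\F_q(Y):\F_q(z)]=\deg (z)_\infty=a$.

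The counting step comes next. The point $P$ is the only point mapping to $\infty$. Every other rational point $Q\in Y(\F_q)\setminus\{P\}$ is not a pole of $z$, so $z(Q)=\alpha$ for some $\alpha\in\F_q$, and $Q$ lies in the support of the zero divisor of $z-\alpha$. For each $\alpha\in\F_q$ the function $z-\alpha$ also has pole divisor $aP$. Hence its zero divisor has degree $a$, and it has at most $a$ distinct rational zeros. Summing over the $q$ values of $\alpha$ gives at most $qa$ rational points other than $P$. Adding $P$ itself yields
\[
       \#Y(\F_q)\le qa+1,
\]
which is \eqref{eq:lewittes}.

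No step is a genuine obstacle. The only point needing care is that $z$ is defined over $\F_q$, so that its values at $\F_q$-rational points lie in $\F_q$. This holds because $H_Y(P)$ is defined through $\ell_{\F_q}$, so the witnessing function can be taken in $\F_q(Y)$. Lemma~\ref{lem:basechange-semigroup} confirms that nothing changes if one prefers to compute the semigroup over an extension field. The degree fact $\deg(z-\alpha)_0=\deg(z-\alpha)_\infty=a$ is standard: a principal divisor has degree $0$.
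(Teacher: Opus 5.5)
Your proposal is correct. It is the standard argument behind the cited result: the paper gives no proof of its own and simply refers to Lewittes, whose proof is this same fibre count, phrased as the observation that $z^q-z=\prod_{\alpha\in\F_q}(z-\alpha)$ has pole divisor $qaP$ and vanishes at every $\F_q$-rational point other than $P$.
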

\section{The ray extension and its automorphisms}\label{sec:action}

We begin with the divisor of the function $f=x^q-x$. Put
\begin{equation}\label{eq:Ddef}
       D=\mathfrak m=\sum_{P\in X(\F_q)}P.
\end{equation}
Thus $P_\infty$ occurs in $D$ with coefficient $1$.

\begin{lemma}\label{lem:divf}
For $X=\Sm_q$ or $X=\Rm_q$ one has
\begin{equation}\label{eq:divf}
       \Div(f)=D-NP_\infty.
\end{equation}
\end{lemma}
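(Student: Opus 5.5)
We compute the zeros and the poles of $f=x^q-x$ separately and then compare degrees.

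\emph{Poles.} By \eqref{eq:suzuki-x-pole} and \eqref{eq:ree-x-pole}, $x$ has a single pole, at $P_\infty$, of order $q$ in the Suzuki case and $q^2$ in the Ree case. Hence $f$ has its only pole at $P_\infty$, of order $q\cdot q=q^2$, respectively $q\cdot q^2=q^3$. By \eqref{eq:dN} this order is $N-1$ in both cases. So $(f)_\infty=(N-1)P_\infty$.

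\emph{Zeros.} An affine point is a zero of $f$ exactly when $x(P)\in\F_q$. On the Suzuki curve, $x\in\F_q$ makes the right-hand side of \eqref{eq:suzuki} vanish, so $y^q+y=0$ and $y\in\F_q$. On the Ree curve, $x\in\F_q$ similarly gives $y,z\in\F_q$ from \eqref{eq:ree}. Thus the zeros of $f$ are precisely the affine $\F_q$-rational points. By \eqref{eq:suzuki-points} and \eqref{eq:ree-points}, there are $N-1$ of them. Since $\deg(f)_0=\deg(f)_\infty=N-1$ and each of these $N-1$ points is a zero, each is a simple zero. Therefore
\[
  \Div(f)=\sum_{P\in X(\F_q)\setminus\{P_\infty\}}P-(N-1)P_\infty=D-NP_\infty,
\]
where the last equality uses that $D$ contains $P_\infty$ with coefficient $1$.

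The main obstacle is identifying the zeros. Two facts are needed: that every point with $x\in\F_q$ is actually $\F_q$-rational, and the correct count of affine rational points. Once these are in hand, the degree comparison rules out higher zero multiplicities automatically.
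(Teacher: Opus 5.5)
Your proposal is correct and follows essentially the paper's argument: $f$ has its only pole at $P_\infty$, of order $N-1$; the $N-1$ affine $\F_q$-rational points are zeros; and comparing degrees forces these zeros to be simple. Your separate check that every zero with $x(P)\in\F_q$ is $\F_q$-rational is correct but not needed, since the same degree count already rules out any further zeros, which is how the paper concludes.
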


\begin{proof}
Suppose first that $X=\Sm_q$. Every affine $\F_q$-rational point is a
zero of $f$. By \eqref{eq:suzuki-x-pole}, the unique pole of $f$ is
$P_\infty$, of order $q^2$. Since there are exactly $q^2$ affine
rational points, all these zeros are simple and there are no others.
Thus
\[
       \Div(f)=D-(q^2+1)P_\infty.
\]
For the Ree curve, \eqref{eq:ree-x-pole} gives a unique pole of order
$q^3$, and the $q^3$ affine rational points are again all the zeros.
This gives \eqref{eq:divf} in both cases.
\end{proof}

We shall use the invariance of the ray class field under
$\F_q$-automorphisms of the base curve.

\begin{lemma}\label{lem:ray-invariant}
Let $\sigma\in\Aut_{\F_q}(X)$, extended trivially on $\F_Q$. Any
extension of $\sigma$ to a fixed algebraic closure of $K$ satisfies
\[
       \sigma(L)=L.
\]
\end{lemma}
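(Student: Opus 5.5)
The plan is to use the characterization of $L$ as the maximal abelian extension of $K$ with conductor dividing $\mathfrak m$ in which every place of $\Sigma$ splits completely. We show that $\sigma(L)$ has the same characterization, and then conclude $\sigma(L)=L$ by maximality and uniqueness. Fix an extension $\tilde\sigma$ of $\sigma$ to the algebraic closure $\overline K$. Since $\sigma$ is an $\F_q$-automorphism of $X$ extended trivially to $\F_Q$, it restricts to a field automorphism of $K=\F_Q(X)$ fixing $\F_Q$. Then $\tilde\sigma(L)$ is an extension of $\tilde\sigma(K)=K$ inside $\overline K$.

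First, $\tilde\sigma(L)/K$ is abelian. Indeed, $\tilde\sigma$ induces an isomorphism of extensions $L/K\to\tilde\sigma(L)/K$ that is twisted by $\sigma$ on $K$. Conjugation by $\tilde\sigma$ therefore identifies $\Gal(\tilde\sigma(L)/K)$ with $\Gal(L/K)$, and $\tilde\sigma(L)/K$ is Galois and abelian. Second, we track ramification and splitting. For a place $P$ of $K$ and a place $\mathfrak P$ of $L$ over $P$, the image $\tilde\sigma(\mathfrak P)$ is a place of $\tilde\sigma(L)$ over $\sigma(P)$. Ramification indices, residue degrees, and higher ramification groups are transported, since $\tilde\sigma$ conjugates the decomposition and inertia filtrations. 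Hence the conductor of $\tilde\sigma(L)/K$ is $\sigma(\mathfrak f(L/K))$, and it divides $\sigma(\mathfrak m)$. The set of places of $K$ that split completely in $\tilde\sigma(L)$ is $\sigma$ of the corresponding set for $L$, and so it contains $\sigma(\Sigma)$.

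It remains to see that $\sigma(\mathfrak m)=\mathfrak m$ and $\sigma(\Sigma)=\Sigma$. Because $\sigma$ is defined over $\F_q$, it commutes with the $q$-Frobenius. It therefore permutes $X(\F_q)$ and $X(\F_Q)$, and hence, by Lemma~\ref{lem:splitting-set}, it also permutes $\Sigma=X(\F_Q)\setminus X(\F_q)$. Thus $\tilde\sigma(L)$ is an abelian extension of $K$ with conductor dividing $\mathfrak m$ in which all places of $\Sigma$ split completely. By the maximality defining $L$, we get $\tilde\sigma(L)\subseteq L$. The degrees agree, $[\tilde\sigma(L):K]=[L:K]$, and $L/K$ is finite, so $\tilde\sigma(L)=L$.

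The only delicate point is making "maximal abelian extension" precise: the compositum of two abelian extensions with conductor dividing $\mathfrak m$ and splitting $\Sigma$ again has these properties, so $L$ contains every such extension. This holds because conductors of composita divide the lcm, and complete splitting is preserved under composita. The transport of the conductor under $\tilde\sigma$ is routine once it is phrased via ramification groups.
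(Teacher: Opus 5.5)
Your proof is correct and follows the paper's argument: you transport the Galois, ramification and decomposition groups by $\tilde\sigma$, check $\sigma(\mathfrak m)=\mathfrak m$ and $\sigma(\Sigma)=\Sigma$, and get $\tilde\sigma(L)\subseteq L$ from maximality. The differences are minor: you finish with $[\tilde\sigma(L):K]=[L:K]$ and finiteness of $L/K$, whereas the paper applies the same argument to $\sigma^{-1}$; and you obtain $\sigma(\Sigma)=\Sigma$ via Lemma~\ref{lem:splitting-set}, whereas the paper uses that $\sigma$ preserves degrees of closed points.
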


\begin{proof}
Since $\sigma$ is defined over $\F_q$, it permutes $X(\F_q)$ and hence
preserves the modulus $\mathfrak m$. It also preserves the degrees of
closed points of $X/\F_q$, and therefore permutes the set $\Sigma$.

Conjugation by $\sigma$ carries the Galois group, ramification groups,
and decomposition groups of $L/K$ onto those of $\sigma(L)/K$. Hence
$\sigma(L)/K$ is abelian, its conductor satisfies
\[
       \mathfrak f(\sigma(L)/K)
       =\sigma\bigl(\mathfrak f(L/K)\bigr)
       \le \sigma(\mathfrak m)
       =\mathfrak m,
\]
and every place of $\sigma(\Sigma)=\Sigma$ splits completely in it.
By the defining maximality of $L$,
\[
       \sigma(L)\subseteq L.
\]
Applying the same argument to $\sigma^{-1}$ gives
$\sigma^{-1}(L)\subseteq L$, and therefore $L\subseteq\sigma(L)$.
Hence $\sigma(L)=L$.
\end{proof}

We next specify a small subgroup of automorphisms; no classification of
the full automorphism group is needed. In the Suzuki case the maps
\[
 (x,y)\longmapsto
 (x+a,\ y+a^{q_0}x+b),\qquad a,b\in\F_q,
\]
and in the Ree case the maps
\[
 (x,y,z)\longmapsto
 \bigl(x+a,\ y+a^{q_0}x+b,\
       z-a^{q_0}y+a^{2q_0}x+c\bigr),
 \qquad a,b,c\in\F_q,
\]
preserve the defining equations and fix $P_\infty$. They act
transitively on the affine $\F_q$-rational points. Together with the
standard involution, which exchanges $P_\infty$ with the affine origin,
they generate a finite subgroup
\[
       G\le\Aut_{\F_q}(X)
\]
acting doubly transitively on $X(\F_q)$. For the Suzuki involution see
\cite{HansenStichtenoth,Henn}, and for the Ree involution see
\cite{Pedersen}; cf.\ \cite[Lemmas~3.3 and~4.2]{Skabelund}.

We now collect the properties of the ray extension that will be used
below. The first assertion follows from \cite[Sec.~5]{Skabelund}; the
remaining assertions are proved here in the form needed below.

\begin{proposition}\label{prop:ray-structure}
Let
\[
       A=\Gal(L/K),\qquad n=[L:K],
\]
and let $p$ denote the characteristic. Then the following hold.
\begin{enumerate}[label=\rm(\roman*)]

\item\label{it:containment}
The Skabelund field $K(t)$, $t^m=f$, is contained in $L$. In
particular, $m\mid n$.

\item\label{it:cyclic}
The extension $L/K$ is cyclic, totally ramified at every point of $D$,
and unramified outside $D$. Moreover, $p\nmid n$ and $n\mid Q-1$.

\item\label{it:central}
If $\Gamma$ denotes the group of all lifts to $L$ of the elements of
$G$, then
\[
       1\longrightarrow A\longrightarrow\Gamma\longrightarrow G
       \longrightarrow1
\]
is exact and $A$ is central in $\Gamma$.

\item\label{it:normalform}
There are $\lambda\in\F_Q^\times$ and $v\in L$ such that
\begin{equation}\label{eq:twisted-normal-form}
       L=K(v),\qquad v^n=\lambda f.
\end{equation}

\end{enumerate}
\end{proposition}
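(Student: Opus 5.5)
Part \ref{it:containment} is essentially bookkeeping. The Skabelund field $K(t)$, $t^m=f$, is a Kummer extension of $K$; note that $m\mid N\mid Q-1$, since $N=R+1$ divides $R^2-1$. By Lemma~\ref{lem:divf}, $\Div(f)=D-NP_\infty$ and $m\mid N$, so $K(t)/K$ is ramified only at the points of $D$, and tamely. Hence its conductor divides $\mathfrak m$. I would cite \cite[Sec.~5]{Skabelund} for the complete splitting of the places in $\Sigma$. I would also check it directly: for $P\in\Sigma$, $f(P)=x^q-x$ at an $\F_Q$-point, and $f(P)$ should be an $m$-th power in $\F_Q^\times$. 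This follows from the maximality of $\widetilde X$ over $\F_Q$, because a maximal cover must have every $\F_Q$-point of $X$ split. By maximality of $L$ we get $K(t)\subseteq L$, and therefore $m\mid n$.

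For \ref{it:cyclic}, I would use the ray class group description. $\Gal(L/K)$ is the quotient of $\mathrm{Cl}_{\mathfrak m}(K)$ by the subgroup generated by the classes of $\Sigma$. Since the modulus is reduced, the wild part $U^{(1)}$ is killed, and the local contributions are the groups $(\cO_P/P)^\times\cong\F_Q^\times$ for $P\in D$. The key step is to show that the classes of $\Sigma$, together with principal divisors, generate the whole degree-zero ray class group modulo the image of $\prod_{P\in D}\F_Q^\times/\F_Q^\times$. Here maximality enters: Frobenius acts as $-R$ on the Jacobian over $\F_Q$, and the degree-one classes $P-P_\infty$ for $P\in\Sigma\cup D$ generate $J(\F_Q)$. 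I would then use the doubly transitive group $G$ to identify the images of the local inertia groups. Each inertia group is a quotient of $\F_Q^\times$, hence cyclic of order prime to $p$ and dividing $Q-1$. By $G$-symmetry, all these inertia groups coincide with $A$ itself. This shows that $A$ is cyclic, that $p\nmid n$, that $n\mid Q-1$, and that $L/K$ is totally ramified at $D$. This generation step is the main obstacle. I would first try to show that $L^{I_{P_\infty}}$ is an unramified extension in which all of $\Sigma$ splits. Since every $\F_Q$-point lies in $\Sigma\cup D$ and the point count is already maximal, the Weil bound then forces this unramified extension to be trivial.

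For \ref{it:central}, Lemma~\ref{lem:ray-invariant} shows that every lift of $\sigma\in G$ preserves $L$. So $\Gamma$ is a group, and the sequence is exact because $L/K$ is Galois. Conjugation by a lift $\tilde\sigma$ induces on $A$ the map transported by $\sigma$ through the Artin map. Since $A$ is cyclic and equals the inertia group at each point of $D$, I would compute this action on the inertia generators. The tame inertia character $\F_Q^\times\to A$ is canonical and $\sigma$ fixes $\F_Q$ pointwise, so conjugation by $\tilde\sigma$ sends the tame generator at $P$ to the tame generator at $\sigma(P)$, which is the same element of $A$. Hence $A$ is central.

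For \ref{it:normalform}, since $\mu_n\subseteq\F_Q$, Kummer theory gives $L=K(w)$ with $w^n=g\in K^\times$, where $g$ generates the cyclic subgroup $\langle g\rangle K^{\times n}/K^{\times n}$. Total ramification at $D$, with the extension unramified elsewhere, forces $\Div(g)\equiv \sum_{P\in D}a_PP$ modulo $n$, with each $a_P$ a unit mod $n$. Centrality under the transitive group $G$ makes all the $a_P$ equal: the eigencharacter of $A$ on $w$ is $G$-invariant, so $\sigma(g)/g^{\pm1}$ is an $n$-th power, with the same exponent. Changing the generator, we may take $a_P=1$. Then $g/f$ has divisor divisible by $n$, which gives a class in $J(\F_Q)[n]$. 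Since $n\mid Q-1$ and $J(\F_Q)\cong(\ZZ/(R+1))^{2g}$, I would show that this class vanishes using $G$-invariance again. Then $g=\lambda f\,h^n$, and we set $v=w/h$.
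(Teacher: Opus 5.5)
\textbf{There are genuine gaps in (ii) and (iii).}

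Your part (i) is fine. The point count showing that maximality of $\widetilde X$ forces every place of $\Sigma$ to split completely in $K(t)$ is a nice self-contained substitute for the citation. The first real gap is in (ii), at ``By $G$-symmetry, all these inertia groups coincide with $A$ itself.'' Symmetry only says that a lift $\tilde\sigma$ conjugates $I_P$ onto $I_{\sigma(P)}$, so the inertia groups at the points of $D$ have a common order $e$. It does not make them equal: a priori $A$ could be non-cyclic, generated by several distinct cyclic inertia subgroups. Your Weil-bound argument on an unramified subextension only yields $A=\langle I_P: P\in D\rangle$. You give no reason why $L^{I_{P_\infty}}$ should be unramified at the other points of $D$; that needs $I_P\subseteq I_{P_\infty}$, which is exactly the point at issue. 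The missing step is to apply Hasse--Weil to $L$ itself, ramification included. With common index $e$, Riemann--Hurwitz and $\#X_{\rm rcf}(\F_Q)\ge nS$ give $nS\le\#X_{\rm rcf}(\F_Q)\le nS+N(R+1-Rn/e)$. Hence $n/e<2$, so $e=n$, and then $A=I_P\hookrightarrow\F_Q^\times$. This is the paper's argument.

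The more serious gap is that your centrality argument in (iii) is circular. Let $\Pi$ be a uniformizer at the place $P'$ over $P$ and put $\theta_P(a)=a(\Pi)/\Pi \bmod P'$. Since $\tilde\sigma$ fixes $\F_Q$, one does have $\theta_{\sigma(P)}(\tilde\sigma a\tilde\sigma^{-1})=\theta_P(a)$. But ``the same element of $A$'' asserts $\theta_{\sigma(P)}=\theta_P$, and that is not automatic. If $L=K(w)$ with $w^n=g$ and $v_P(g)=a_P$, then $\theta_P=\chi^{a_P^{-1}}$ with $\chi(a)=a(w)/w$. So $\theta_P=\theta_{P''}$ if and only if $a_P\equiv a_{P''}\pmod n$, which is precisely what you later derive \emph{from} centrality in (iv). Equivalently, the local reciprocity maps $\F_Q^\times\to A$ at different points of $D$ need not agree. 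Your identity legitimately gives centrality only for $\sigma$ fixing $P$: then $\theta_P(\tilde\sigma a\tilde\sigma^{-1})=\theta_P(a)$ with $\theta_P$ injective. This is the paper's step $[b,a]\in\Gamma_{P'}^{(1)}\cap A=1$. The ingredient you are missing is that the point stabilizers generate $G$. The paper gets this from double transitivity: primitivity forces the normal closure of $G_{P_\infty}$ to be transitive, hence equal to $G$.

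Once (ii) and (iii) are repaired, your (iv) works, but two steps need filling in. First, $\Div(g/f)\equiv NP_\infty\pmod n$, so you need $n\mid N$; this follows from $\deg\Div(g)=0$ with $\Div(g)=D+nB$. Second, the class $[C]\in J(\F_Q)[n]$ does vanish. It is fixed by the $p$-group $T$ of translations, which fix $P_\infty$ (so their lifts centralize $A$) and fix $f$. Since $X/T$ is rational, $|T|[C]=\pi^*\pi_*[C]=0$, and $\gcd(|T|,n)=1$ gives $[C]=0$. The paper avoids this Jacobian step by descending to $E=L^{U'}$ over the rational field $\F_Q(x)$.
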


\begin{proof}
Assertion \ref{it:containment} is the inclusion obtained in
\cite[Sec.~5]{Skabelund}; see also \cite[p.~536]{Skabelund}.

We prove \ref{it:cyclic} directly. Since the conductor divides the
reduced divisor $\mathfrak m$, the extension $L/K$ is unramified outside
$D$ and tamely ramified at the points of $D$. By Lemma~\ref{lem:ray-invariant}, every element of $G$ has a lift to
$L$. Since $G$ is transitive on $D$, the ramification index above every
point of $D$ is the same; denote it by $e$. Riemann--Hurwitz gives
\[
  2g(L)-2=n(2g(X)-2)+Nn\left(1-\frac1e\right).
\]
All $S$ places in $\Sigma$ split completely in $L$, and hence
$\#X_{\rm rcf}(\F_Q)\ge nS$. The Hasse--Weil bound, together with the
preceding formula and \eqref{eq:S}, gives
\[
 \#X_{\rm rcf}(\F_Q)
 \le nS+N\left(R+1-R\frac ne\right).
\]
Thus $R+1-Rn/e\ge0$. If $e<n$, then $n/e\ge2$, and hence
\[
 R+1-R\frac ne\le1-R<0,
\]
a contradiction. Therefore $e=n$.

Thus every point of $D$ is totally ramified. Let $P\in D$ and let $P'$ be
the unique point above it. Its residue field is $\F_Q$, and the inertia
group is all of $A$. Since the ramification is tame, the action on a
local parameter gives an injection
\[
       A\hookrightarrow\F_Q^\times
\]
by \cite[Prop.~3.8.5]{Stichtenoth}. Hence $A$ is cyclic, $p\nmid n$, and
$n\mid Q-1$. This proves \ref{it:cyclic}.

We prove \ref{it:central}. By Lemma~\ref{lem:ray-invariant}, every
element of $G$ has a lift to $L$, and the lifts of the identity are
exactly the elements of $A$. Hence the displayed sequence is exact.

Fix $P\in D$, and let $P'$ be the unique point above it. The inverse
image of the stabilizer $G_P$ is the stabilizer $\Gamma_{P'}$. Since
every element of $\Gamma$ fixes $\F_Q$ pointwise and the residue field
of $P'$ is $\F_Q$, the decomposition group $\Gamma_{P'}$ is also the
inertia group at $P'$. By \cite[Prop.~3.8.5]{Stichtenoth},
\[
       \Gamma_{P'}/\Gamma_{P'}^{(1)}
\]
embeds in $\F_Q^\times$, and $\Gamma_{P'}^{(1)}$ is a $p$-group. If
$b\in\Gamma_{P'}$ and $a\in A$, then $[b,a]\in A$ because $A$ is
normal, while
\[
       [b,a]\in\Gamma_{P'}^{(1)}
\]
because the quotient above is abelian. Since $|A|=n$ is prime to $p$,
we get $[b,a]=1$. Thus every $\Gamma_{P'}$ centralizes $A$.

It remains to note that the point stabilizers generate $G$. The
translation subgroup of $G_{P_\infty}$ is transitive on
$D\setminus\{P_\infty\}$, and the standard involution moves
$P_\infty$. Hence $G$ is doubly transitive on $D$, and in particular
primitive. Let $M$ be the normal closure of $G_{P_\infty}$ in $G$.
The $M$-orbits form a $G$-invariant partition of $D$. Since
$G_{P_\infty}$ contains translations which move affine rational
points, $M$ does not act trivially; hence, by primitivity, $M$ is
transitive. Given $\gamma\in G$, choose $\mu\in M$ such that
\[
       \mu(P_\infty)=\gamma(P_\infty).
\]
Then $\mu^{-1}\gamma\in G_{P_\infty}\subseteq M$, and therefore
$\gamma\in M$. Thus $G=M$, so the point stabilizers generate $G$.

The subgroup of $\Gamma$ generated by the groups $\Gamma_{P'}$,
$P\in D$, contains $A$ and maps onto $G$. It is therefore all of
$\Gamma$. Hence $A$ is central in $\Gamma$.

We finally prove \ref{it:normalform}. Put
\[
       F=\F_Q(x).
\]
The extension $K/F$ is Galois. Its group $U$ consists of the vertical
translations
\[
       (x,y)\longmapsto(x,y+b)
\]
in the Suzuki case, and
\[
       (x,y,z)\longmapsto(x,y+b,z+c)
\]
in the Ree case, with $b,c\in\F_q$. Indeed, these give $q$,
respectively $q^2$, automorphisms of $K/F$, and
\[
       [K:F]=q,\qquad [K:F]=q^2,
\]
respectively, by \eqref{eq:suzuki-x-pole} and
\eqref{eq:ree-x-pole}. The additive equations show that $K/F$ is
unramified at every finite place. Moreover, it splits completely above
each place $x=a$, $a\in\F_q$.

Let $\widehat U$ be the inverse image of $U$ in $\Gamma$. Then
\[
       |\widehat U|=n|U|=[L:F],
\]
and every element of $\widehat U$ fixes $F$. Hence $L/F$ is Galois
with group $\widehat U$. By \ref{it:central}, $A$ is central in
$\widehat U$. Since $A$ is a Hall $p'$-subgroup of $\widehat U$ and
$\widehat U/A\cong U$ is a $p$-group, Schur--Zassenhaus gives a
complement $U'$ to $A$. Any two such complements are conjugate by an
element of $A$, and $A$ is central; hence $U'$ is unique. Thus
\[
       \widehat U=A\times U'.
\]
In particular, $U'$ is the unique Sylow $p$-subgroup of $\widehat U$,
and hence is characteristic.

Let
\[
       E=L^{U'}.
\]
Then
\[
       \Gal(E/F)\cong A,\qquad
       E\cap K=F,\qquad
       EK=L.
\]
Let $\tau$ be a generator of $A$. Since $n\mid Q-1$, we have
$\mu_n\subset\F_Q^\times$. Kummer theory
\cite[Prop.~3.7.3]{Stichtenoth} gives
\[
       E=F(v),\qquad v^n=\rho
\]
for some $\rho\in F^\times$, with
\[
       \tau(v)=\zeta v
\]
for a primitive $n$-th root of unity $\zeta$.

A finite place of $F$ which ramifies in $E$ also ramifies in $L$.
Since $K/F$ is unramified at finite places, such a place must be
$x=a$ for some $a\in\F_q$. At this place $K/F$ splits completely,
while $L/K$ is totally ramified of degree $n$. Hence
\[
       e(L/F)=n.
\]
Since
\[
       e(L/F)=e(L/E)e(E/F),
\]
where $e(L/E)$ is a power of $p$ and $e(E/F)$ divides $n$, while
$p\nmid n$, it follows that
\[
       e(L/E)=1,\qquad e(E/F)=n.
\]
Kummer theory then gives
\[
       \gcd\bigl(n,v_{x=a}(\rho)\bigr)=1
       \qquad(a\in\F_q).
\]

For $a\in\F_q$, let $\sigma_a\in G$ be the automorphism above with
parameter $a$ and all other parameters equal to $0$. Conjugation by
$\sigma_a$ preserves the subgroup $U$. Hence any lift
$\widetilde\sigma_a\in\Gamma$ normalizes $\widehat U$ and therefore
its characteristic subgroup $U'$. Thus
\[
       \widetilde\sigma_a(E)=E.
\]
On $F$, the automorphism $\widetilde\sigma_a$ acts by $x\mapsto x+a$.
By \ref{it:central}, it commutes with $\tau$, so
$\widetilde\sigma_a(v)$ belongs to the $\zeta$-eigenspace $Fv$.
Therefore
\[
       \widetilde\sigma_a(v)=h_av
\]
for some $h_a\in F^\times$. Taking $n$-th powers gives
\[
       \rho(x+a)=h_a^{\,n}\rho(x),
       \qquad a\in\F_q.
\]

It follows that the valuations $v_{x=a}(\rho)$, $a\in\F_q$, are all
congruent modulo $n$ to the same residue class $u$, with
$\gcd(u,n)=1$. Choose $j$ such that
\[
       ju\equiv1\pmod n
\]
and replace $v$ by $v^j$. Since $\gcd(j,n)=1$, this is again a Kummer
generator of $E/F$. We may therefore assume that
\[
       v_{x=a}(\rho)\equiv1\pmod n
       \qquad(a\in\F_q).
\]
At every other finite place $\mathfrak p$ of $F$, the extension $E/F$
is unramified, and hence
\[
       v_{\mathfrak p}(\rho)\equiv0\pmod n.
\]
It follows that
\[
       \rho=\lambda\,(x^q-x)\,\psi^n
\]
for some $\lambda\in\F_Q^\times$ and $\psi\in F^\times$. Replacing
$v$ by $v/\psi$ gives
\[
       v^n=\lambda f.
\]
Finally, since $EK=L$, we have $L=K(v)$. This proves
\ref{it:normalform}.
\end{proof}

\begin{remark}\label{rem:normal-form}
For the argument below we only need the form
\[
       v^n=\lambda f,\qquad \lambda\in\F_Q^\times.
\]
The constant $\lambda$ cancels from the automorphism ratios used below,
so no further normalization is needed.
\end{remark}

\begin{remark}\label{rem:conductor}
In \cite[p.~536]{Skabelund} the field $L$ is described as the ray class
field ``of conductor $\mathfrak m$''. The inclusion
$K(t)\subseteq L$ from Proposition~\ref{prop:ray-structure}\ref{it:containment}
shows directly that the conductor of $L/K$ is exactly $\mathfrak m$.
Indeed, the subextension $K(t)/K$ is totally (and tamely) ramified at
every point of $D$: at the affine points $v_P(f)=1$, while at
$P_\infty$ one has $v_{P_\infty}(f)=1-N$ and $\gcd(m,N-1)=1$.
Since the conductor of $L/K$ divides $D$, it must therefore equal
$D=\mathfrak m$. Hence the two readings of the definition agree.
\end{remark}

\section{The standard involution and the forced non-gap}\label{sec:involution}

For both the Suzuki and Ree curves there is a standard
$\F_q$-involution $\iota$ which exchanges $P_\infty$ with an affine
rational point $P_0$; see \cite{HansenStichtenoth,Henn,Pedersen} and
\cite[Lemmas~3.3 and~4.2]{Skabelund}. We shall only use this action on
the base curve.

\begin{proposition}\label{prop:forced-nongap}
Let $n=[L:K]$. Then
\begin{equation}\label{eq:ndivN}
       n\mid N
       \qquad\text{and}\qquad
       \frac Nn\in H_X(P_\infty).
\end{equation}
Writing $n=mk$, which is possible by
Proposition~\ref{prop:ray-structure}\ref{it:containment}, one has
\begin{equation}\label{eq:forced-nongap}
       k\mid s_0,
       \qquad
       \frac{s_0}{k}=\frac Nn\in H_X(P_\infty).
\end{equation}
\end{proposition}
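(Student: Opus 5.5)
We follow the outline from the introduction. Let $\widetilde\iota\in\Gamma$ be a lift to $L$ of the standard involution $\iota$; it exists by Lemma~\ref{lem:ray-invariant}. By Proposition~\ref{prop:ray-structure}\ref{it:central}, $\widetilde\iota$ commutes with every $\tau\in A$. With $L=K(v)$, $v^n=\lambda f$ and $\tau(v)=\zeta v$ as in Proposition~\ref{prop:ray-structure}\ref{it:normalform}, we get
\[
       \tau(\widetilde\iota(v))=\widetilde\iota(\tau(v))=\zeta\,\widetilde\iota(v).
\]
Thus $\widetilde\iota(v)$ lies in the $\zeta$-eigenspace of $L$ under $A$, which is $Kv$. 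Hence $\widetilde\iota(v)=hv$ for some $h\in K^\times$. Since $\widetilde\iota$ fixes $\F_Q$, it fixes $\lambda$, and taking $n$-th powers gives
\[
       \iota(f)=h^n f .
\]
Note that $\iota(f)$ here means $f\circ\iota$ as an element of $K$.

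Next we compare divisors using Lemma~\ref{lem:divf}. We have $\Div(f)=D-NP_\infty$, and $\iota$ permutes $D$ because it is defined over $\F_q$. Hence $\Div(\iota(f))=\iota^{-1}(\Div f)=D-NP_0$, since $\iota$ swaps $P_\infty$ and $P_0$ and is an involution. Therefore
\[
       n\,\Div(h)=(D-NP_0)-(D-NP_\infty)=N(P_\infty-P_0).
\]
The coefficient of $P_\infty$ gives $n\mid N$, and $\Div(h)=\tfrac Nn(P_\infty-P_0)$. So $h^{-1}$ has a single pole, at $P_\infty$, of order $N/n>0$. Hence $N/n$ is a positive non-gap of $X_{\F_Q}$ at $P_\infty$. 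By Lemma~\ref{lem:basechange-semigroup}, $N/n\in H_X(P_\infty)$.

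Finally we write $n=mk$, which is allowed by Proposition~\ref{prop:ray-structure}\ref{it:containment}. Then $N=ms_0$ by \eqref{eq:s0def}, and $n\mid N$ means $mk\mid ms_0$, so $k\mid s_0$ and $N/n=s_0/k$. This is \eqref{eq:forced-nongap}.

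The main obstacle is the eigenspace step. We must justify that the $\zeta$-eigenspace of the cyclic Kummer extension $L=K(v)$ is exactly $Kv$. Writing $L=\bigoplus_{i=0}^{n-1}Kv^i$, the element $v^i$ lies in the $\zeta^i$-eigenspace, and $\zeta$ is primitive, so $Kv$ is the whole $\zeta$-eigenspace. We also need $\widetilde\iota$ to be $K$-semilinear, acting on $K$ through $\iota$; this is fine because it preserves the decomposition. The one remaining point to check is the convention for how $\iota$ acts on divisors, but the final divisor identity is symmetric enough that either convention gives $n\mid N$ and a function with pole only at one of the two points. If the pole lands at $P_0$ instead, use $h$ in place of $h^{-1}$, or transport the non-gap from $P_0$ to $P_\infty$ via $\iota$.
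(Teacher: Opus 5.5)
Your proposal is correct and follows the paper's proof essentially step by step. Centrality of $A$ forces the lift of $\iota$ to preserve the eigenspace $Kv$; the $n$-th power and divisor comparison then give $n\,\Div(h)=N(P_\infty-P_0)$, so $h^{-1}$ has its only pole at $P_\infty$, of order $N/n$; and the base-change lemma moves this non-gap to $X/\F_q$. Your justification that the $\zeta$-eigenspace is exactly $Kv$, via $L=\bigoplus_{i} Kv^i$, is a detail the paper leaves implicit. The last step also uses that the lift fixes $\zeta\in\F_Q$, which holds for the same reason it fixes $\lambda$.
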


\begin{proof}
Let $\lambda$ and $v$ be as in \eqref{eq:twisted-normal-form}, and
choose a lift of $\iota$ to $L$, again denoted by $\iota$. By
Proposition~\ref{prop:ray-structure}\ref{it:central}, this lift
centralizes $A=\Gal(L/K)$. Let $\tau$ generate $A$ and write
\[
       \tau(v)=\zeta v,
\]
where $\zeta\in\F_Q^\times$ is a primitive $n$-th root of unity. Since
$\iota$ fixes $\F_Q$ pointwise,
\[
       \tau(\iota(v))
       =\iota(\tau(v))
       =\iota(\zeta v)
       =\zeta\,\iota(v).
\]
The $\zeta$-eigenspace of $\tau$ on $L$ is the one-dimensional
$K$-subspace $Kv$. Hence
\[
       \iota(v)=hv
\]
for some $h\in K^\times$. Taking $n$-th powers in
\[
       v^n=\lambda f
\]
and using $\iota(\lambda)=\lambda$ gives
\begin{equation}\label{eq:iota-ratio}
       \frac{\iota(f)}{f}=h^n.
\end{equation}

By Lemma~\ref{lem:divf},
\[
       \Div(f)=D-NP_\infty.
\]
Since $\iota$ permutes $X(\F_q)$ and
$\iota(P_\infty)=P_0$, we have
\[
       \Div(\iota(f))=D-NP_0.
\]
Taking divisors in \eqref{eq:iota-ratio} gives
\begin{equation}\label{eq:divh}
       n\Div(h)=N(P_\infty-P_0).
\end{equation}
Comparing coefficients at $P_\infty$ gives $n\mid N$, and
\[
       \Div(h^{-1})
       =\frac Nn(P_0-P_\infty).
\]
Thus $h^{-1}$ has a unique pole at $P_\infty$, of order $N/n$, on
$X_{\F_Q}$. By Lemma~\ref{lem:basechange-semigroup}, the same integer
$N/n$ is a non-gap at $P_\infty$ on $X/\F_q$. This proves
\eqref{eq:ndivN}.

Finally, $N=ms_0$ and $n=mk$, so $k\mid s_0$ and
\[
       \frac Nn=\frac{s_0}{k}.
\]
This proves \eqref{eq:forced-nongap}.
\end{proof}

\section{Proof of the main theorem}\label{sec:proof}

The first positive non-gap at $P_\infty$ is $q$ for $\Sm_q$ and $q^2$
for $\Rm_q$.  Indeed, the pole orders \eqref{eq:suzuki-x-pole} and
\eqref{eq:ree-x-pole} show that $q$ and $q^2$, respectively, are
non-gaps.  If $a$ denotes the first positive non-gap,
Lemma~\ref{lem:lewittes} gives
\[
       q^2+1\le qa+1
       \quad\text{for }\Sm_q,
       \qquad
       q^3+1\le qa+1
       \quad\text{for }\Rm_q.
\]
Hence $a\ge q$ and $a\ge q^2$, respectively.

\begin{proof}[Proof of Theorem~\ref{thm:main}]
Write $n=mk$ as in Proposition~\ref{prop:forced-nongap}.  Suppose first
that $X=\Sm_q$.  From \eqref{eq:s0-values},
\[
       s_0=q+2q_0+1.
\]
If $k>1$, Proposition~\ref{prop:forced-nongap} gives the positive
non-gap $s_0/k$ at $P_\infty$.  But
\[
       \frac{s_0}{k}
          \le\frac{q+2q_0+1}{2}
          <q,
\]
because $q=2q_0^2$ and $q_0\ge2$, so that $q-2q_0-1=2q_0^2-2q_0-1>0$.
This contradicts the fact that the first positive non-gap at $P_\infty$
is $q$.  Hence $k=1$.

Now let $X=\Rm_q$.  In this case
\[
       s_0=q^2+3qq_0+2q+3q_0+1.
\]
If $k>1$, then again $s_0/k$ is a positive non-gap at $P_\infty$, while
\[
       \frac{s_0}{k}\le\frac{s_0}{2}<q^2.
\]
Indeed, using $q=3q_0^2$,
\begin{align*}
 2q^2-s_0
 &=q^2-3qq_0-2q-3q_0-1\\
 &=9q_0^4-9q_0^3-6q_0^2-3q_0-1>0,
\end{align*}
since $9q_0^3(q_0-1)\ge18q_0^3>6q_0^2+3q_0+1$ for $q_0\ge3$.
This contradicts the fact that the first positive
non-gap at $P_\infty$ is $q^2$.  Thus $k=1$ here as well.

In both cases $n=m$.  By
Proposition~\ref{prop:ray-structure}\ref{it:containment}, the Skabelund
field $K(t)$, which has degree $m$ over $K$, is contained in $L$.  The
two extensions of $K$ now have the same degree, so $L=K(t)$.  Hence
\[
       \Sm_{\rm rcf}=\tSm_q,
       \qquad
       \Rm_{\rm rcf}=\tRm_q,
\]
in the respective cases.
\end{proof}

\section{A remark on maximal Kummer quotients}\label{sec:kummer-quotients}

For an integer $r$ prime to the characteristic, let $C_r$ denote the
smooth projective model of the affine Kummer curve
\[
       C_r:\qquad u^r=x^q-x
\]
over the rational $x$-line.  Skabelund already observed that in the
Suzuki case $C_{q^2+1}$ is maximal over $\F_{q^4}$, although $q^2+1$ is
a proper multiple of $m$, and he noted that proper multiples also occur
in the Ree case; see the discussion following
\cite[Cor.~5.5]{Skabelund}.  Thus maximality of $C_r$ alone cannot
determine the degree of the ray class extension.  For completeness, we
record an explicit uniform Ree example.

\begin{proposition}\label{prop:larger-maximal-kummer}
In the Ree case, the curve $C_{q^2-q+1}$ is maximal over $\F_{q^6}$,
and $q^2-q+1$ is a proper multiple of $m=q-3q_0+1$.
\end{proposition}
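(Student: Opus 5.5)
The plan has two parts: a one-line factorization for the divisibility claim, and a covering argument for maximality.

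\emph{Divisibility.} Using $q=3q_0^2$, so that $9q_0^2=3q$, we have
\[
(q-3q_0+1)(q+3q_0+1)=(q+1)^2-9q_0^2=q^2+2q+1-3q=q^2-q+1 .
\]
The second factor exceeds $1$, so $q^2-q+1$ is a proper multiple of $m$. Note also that $q^2-q+1$ divides $q^3+1=(q+1)(q^2-q+1)$ and is prime to $3$.

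\emph{Maximality.} Since $q^2-q+1\mid q^3+1$, the map $(x,u)\mapsto (x,u^{q+1})$ is a morphism $C_{q^3+1}\to C_{q^2-q+1}$. By the Kleiman--Serre theorem, a curve covered by an $\F_{q^6}$-maximal curve through an $\F_{q^6}$-morphism is itself $\F_{q^6}$-maximal. So it suffices to show that $C_{q^3+1}$, or a twist $u^{q^3+1}=c\,(x^q-x)$ with $c\in\F_{q^6}^\times$ (which gives the same quotient up to a harmless adjustment), is maximal over $\F_{q^6}$.

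Put $R=q^3$. I would dominate this curve by the Hermitian curve $w^{R+1}=z^R+z$, which is maximal over $\F_{R^2}=\F_{q^6}$. The dominating map would be $u=\delta w$ and $x=B(z)$, where $B(z)=a z^{q^2}+b z^q+c z$ is an additive polynomial satisfying an identity of the form
\[
B(z)^q-B(z)=\varepsilon\,(z^R+z),\qquad \varepsilon\in\F_{q^6}^\times .
\]
The constants must be compatible with $\delta^{R+1}=\varepsilon$, up to the constant twist allowed above. Comparing coefficients in the identity gives
\[
a^q=\varepsilon,\qquad b^q=a,\qquad c^q=b,\qquad -c=\varepsilon .
\]
Hence $c^{q^3}=\varepsilon$ and $c=-\varepsilon$, so $\varepsilon^{q^3-1}=-1$, which is solvable in $\F_{q^6}$.

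\emph{Main obstacle.} The delicate point is the compatibility between the two constants. We have $\varepsilon^{R-1}=-1$, so $\varepsilon$ is \emph{not} an $(R+1)$-th power in $\F_{q^6}$; this is exactly the sign that makes the untwisted curve $w^{R+1}=z^R-z$ minimal rather than maximal. The constant must therefore be absorbed in some other way. I would try, in order:
\begin{enumerate}[label=\rm(\alph*)]
\item rescaling $x$ by an element $\theta$ with $\theta^{q-1}$ suitable, which changes $x^q-x$ into $\theta(x^q-\theta^{q-1}x)$, and re-solving for $B$;
\item using the freedom in the Kummer twist of $u$, i.e.\ replacing $C_{q^2-q+1}$ by $u^{q^2-q+1}=\lambda(x^q-x)$ and checking that the needed $\lambda$ is a $(q^2-q+1)$-th power in $\F_{q^6}$.
\end{enumerate}
The gap between $R+1$ and $q^2-q+1$ gives extra room here, so I expect a consistent choice exists.

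\emph{Fallback.} If the covering cannot be arranged, I would compute directly. The Frobenius eigenvalues of $C_r$ over $\F_{q^6}$ are Gauss sums $g(\chi,\psi)$ with $\chi$ of order dividing $r$ and $\psi$ running over additive characters coming from $x^q-x$. Since $r\mid q^3+1$, Stickelberger's theorem (the pure Gauss sum case) gives $g(\chi)=\pm q^3$ with a uniform sign. This sign must be shown to equal $-1$ times the appropriate normalization, which yields $\#C_r(\F_{q^6})=q^6+1+2g\,q^3$.
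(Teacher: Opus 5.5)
Your divisibility computation and your coefficient identities for $B$ are correct. However, the proposal stops at the step that carries the proof. The ``harmless adjustment'' in passing from a twist of $C_{q^3+1}$ to $C_{q^2-q+1}$ is the assertion that the twisting constant is an $r$-th power in $\F_{q^6}$, where $r=q^2-q+1$. This is false for a general constant, and you leave it as an expectation. As written, the argument is therefore incomplete. The claim does hold here, though, and your option (b) closes the gap in one line.

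Take $\lambda=\varepsilon^{-1}$ and $\delta=1$. Then $(z,w)\mapsto(B(z),w)$ is an $\F_{q^6}$-morphism from the Hermitian curve $w^{R+1}=z^R+z$ onto $u^{R+1}=\varepsilon^{-1}(x^q-x)$. Composing with $u\mapsto u^{q+1}$ gives a map onto $u^r=\varepsilon^{-1}(x^q-x)$. Although $\varepsilon$ is not an $(R+1)$-th power, it is an $r$-th power. Indeed, the $r$-th powers in $\F_{q^6}^\times$ form the kernel of $y\mapsto y^{(q^6-1)/r}$, and
\[
\frac{q^6-1}{r}=(q+1)(q^3-1),\qquad \varepsilon^{(q+1)(q^3-1)}=(-1)^{q+1}=1,
\]
since $q$ is odd. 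For instance, if $\varepsilon=\xi^{(q^3+1)/2}$ for a generator $\xi$ of $\F_{q^6}^\times$, then $\varepsilon=\bigl(\xi^{(q+1)/2}\bigr)^r$. Writing $\varepsilon^{-1}=\mu^r$ and replacing $u$ by $\mu u$ identifies the last curve with $C_r$ over $\F_{q^6}$. The covering theorem then gives maximality.

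With this line, the Gauss-sum fallback is unnecessary; its ``uniform sign'' claim would itself need proof. Incidentally, $w^{R+1}=z^R-z$ is not minimal over $\F_{R^2}$: it has only $R+1$ rational points.

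Once completed, your route genuinely differs from the paper's. The paper substitutes $x=\epsilon x'$ with $\epsilon\in\F_{q^2}$ and $\epsilon^{q-1}=-1$. It then uses that every element of $\F_{q^2}^\times$ is an $r$-th power in $\F_{q^6}$ to rewrite $C_r$ as $u^r=x^q+x$. Next it proves directly that $W\colon w^{q^3+1}=x^q+x$ is maximal, by counting $q^7+1$ points against genus $(q-1)q^3/2$, and only then applies Serre's covering result.

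You avoid any point count by writing down an explicit $\F_{q^6}$-map from the Hermitian curve via the additive polynomial $B$, so maximality is inherited from $H_{q^3}$. This is more conceptual and shows that $C_r$ is Hermitian-covered over $\F_{q^6}$. The paper's count is more self-contained, since it needs neither the Hermitian curve nor the construction of $B$. In both versions the decisive arithmetic input is the same: a particular constant, which is not a $(q^3+1)$-th power, is nevertheless an $r$-th power in $\F_{q^6}$.
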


\begin{proof}
Put
\[
       r=q^2-q+1=(q+1)^2-9q_0^2=m(q+3q_0+1)>m.
\]
Since $\F_{q^2}^\times$ is cyclic and $q$ is odd, if $\xi$ is a
generator then $\epsilon=\xi^{(q+1)/2}$ satisfies
$\epsilon^{q-1}=\xi^{(q^2-1)/2}=-1$.  Fix such an
$\epsilon\in\F_{q^2}$.  The substitution $x=\epsilon x'$ transforms
$x^q-x$ into
\[
       \epsilon^qx'^q-\epsilon x'=-\epsilon(x'^q+x').
\]
Since
\[
       \frac{q^6-1}{r}=(q+1)(q^3-1)=(q^2-1)(q^2+q+1),
\]
every element of $\F_{q^2}^\times$, in particular $-\epsilon$, is an
$r$-th power in $\F_{q^6}$.  Thus, over $\F_{q^6}$, the curve $C_r$ is
isomorphic to the smooth projective model of
\[
       u^r=x^q+x.
\]
It is a quotient of the smooth projective model $W$ of
\[
       w^{q^3+1}=x^q+x,
\]
via $u=w^{q+1}$, because $q^3+1=(q+1)r$.

We claim that $W$ is maximal over $\F_{q^6}$.  The $\F_q$-linear map
\[
       T:\F_{q^6}\longrightarrow\F_{q^6},\qquad T(x)=x^q+x,
\]
has kernel of size $q$.  Its restriction to $\F_{q^3}$ maps $\F_{q^3}$
to itself and is injective: if $x\in\F_{q^3}$ and $x^q=-x$, then
$x^{q^2}=x$, so $x\in\F_{q^2}\cap\F_{q^3}=\F_q$, and hence $x=-x$,
which in characteristic $3$ forces $x=0$.  Thus $T$ maps $\F_{q^3}$
bijectively onto $\F_{q^3}$, and
\[
       T^{-1}(\F_{q^3})=\F_{q^3}+\ker T
\]
has $q^4$ elements, exactly $q$ of which satisfy $T(x)=0$.  Every
nonzero element of $\F_{q^3}$ has exactly $q^3+1$ roots of order
$q^3+1$ in $\F_{q^6}$, while elements of $\F_{q^6}\setminus\F_{q^3}$
have none.  Since $\gcd(q^3+1,q)=1$, the curve $W$ has a unique point
at infinity, which is rational.  Hence
\[
 \#W(\F_{q^6})
   =q+(q^4-q)(q^3+1)+1
   =q^7+1.
\]
The $q$ roots of $x^q+x$ and the point at infinity are totally
ramified in $W\to\PP^1_x$, so Riemann--Hurwitz gives
\[
       g(W)=\frac{(q-1)q^3}{2},
\]
and $\#W(\F_{q^6})=q^6+1+2g(W)q^3$ is the Hasse--Weil upper bound.
Hence $W$ is maximal over $\F_{q^6}$.  Since $C_r$ is covered by $W$
over $\F_{q^6}$, it is maximal as well by the covering result of Serre;
see \cite{Lachaud}.
\end{proof}

The point is that maximality of an auxiliary Kummer quotient is weaker
than the ray-class structure.  In the proof of Theorem~\ref{thm:main},
the additional input is the invariance under the base automorphisms and
the resulting central action on the ray class field.  The standard
involution then yields the divisor identity \eqref{eq:divh}, and the
first positive non-gap excludes every proper multiple of $m$.

\section*{Acknowledgments}

The author was partially supported by CNPq grant no.~302774/2025-4,
FAEPEX grant no.~3485/25, and FAPESP grant no.~2024/00923-6.

\end{document}